\makeatletter \@namedef{subjclassname@2010}{
  \textup{2020} Mathematics Subject Classification}
\newtheorem{thm}{Theorem}[section]
\newtheorem{pro}[thm]{Proposition}
\theoremstyle{remark}
\newtheorem*{rema}{Remark}
\theoremstyle{definition}
\newtheorem{exa}[thm]{\textbf{Example}}
\newcommand{\ran}{\text{\rm{ran}}}
\newcommand{\C}{\mathbb{C}}
\begin{document}

\title[Adjoint of the unbounded operators $TT^*$ and $T^*T$]{Some properties of the adjoint of the unbounded operators $TT^*$ and $T^*T$}
\author[M. H. Mortad]{Mohammed Hichem Mortad}

\date{}
\keywords{Closed operator; Symmetric operator; Self-adjoint
operator; Spectrum}

\subjclass[2010]{Primary 47A05. Secondary 47B15, 47A10, 47A08}

\address{Department of
Mathematics, University of Oran 1, Ahmed Ben Bella, B.P. 1524, El
Menouar, Oran 31000, Algeria.}

\email{mhmortad@gmail.com, mortad.hichem@univ-oran1.dz.}

\begin{abstract}
In this note, we mainly investigate the validity of the identities
$(TT^*)^*=TT^*$ and $(T^*T)^*=T^*T$, where $T$ is a densely defined
closable (or symmetric) operator.
\end{abstract}

\maketitle

\section{Introduction}

First, we assume readers have some familiarity with linear bounded
and unbounded operators on Hilbert spaces. Some useful references
are \cite{Mortad-Oper-TH-BOOK-WSPC}, \cite{Mortad-cex-BOOK}, and
\cite{SCHMUDG-book-2012}. Definitions and notations follow those in
\cite{SCHMUDG-book-2012}.

Let $T$ be a densely defined closed operator. One of the most
fundamental properties in unbounded operator theory is the fact that
$T^*T$ (and also $TT^*$) is a densely defined, self-adjoint and
positive operator. This is a very well-known von-Neumann's theorem.
In fact, von-Neumann's result may be obtained via the so-called
Nelson's trick (cf. \cite{Thaller-Dirac-EQuation-Contains Nelson
trick}). In particular,
\[(TT^*)^*=TT^*\text{ and } (T^*T)^*=T^*T.\]
This result then enables us to define the very important notion of
the modulus of an operator which, and as it is known, intervenes in
the definition of the polar decomposition of an operator. Operator
theorists are also well aware of other uses of the above result.
Some related results may be found in \cite{Boucif-Dehimi-Mortad},
\cite{Gesztesy et al. ANNALI. MATH. AA*}, \cite{Mortad-cex-BOOK},
\cite{RS2}, and \cite{sebestyen-tarcsay-TT* has an extension}.
Notice in the end that the above two equations are particular cases
of a more general and interesting problem in unbounded operator
theory, namely: When does one have
\[(AB)^*=B^*A^*\]
where $A$, $B$ and $AB$ are all densely defined operators? Some very
related papers are \cite{Azizov-Dijksma-closedness-prod-ADJ},
\cite{CG}, \cite{Gustafson-Mortad-I}, \cite{Sebestyen-Tarcsay-adj
sum and product}, and certain references therein.

Recently, Z. Sebestyén and Zs. Tarcsay have discovered that if
$TT^*$ and $T^*T$ are both self-adjoint, then $T$ must be closed
(see \cite{Sebestyen-Tarcsay-TT* von Neumann T closed}). Then F.
Gesztesy and K. Schm\"{u}dgen provided in
\cite{Gesztesy-Schmudgen-AA*} a simpler proof based on a technique
using matrices of unbounded operators. Notice that
Gesztesy-Schm\"{u}dgen's proof only work for complex Hilbert spaces
while the original proof by  Z. Sebestyén and Zs. Tarcsay works for
real Hilbert spaces just as good. To end this remark, Z. Sebestyén
and Zs. Tarcsay also gave a proof of their result using block
operator matrices as well as some other results (see e.g. Theorem
8.1 in \cite{Sebestyen-Tarcsay-LMA-2019}). Myself, jointly with S.
Dehimi, generalized the above results in
\cite{Dehimi-Mortad-squares-polynomials}.

In this paper, we investigate the validity of the identities
$(TT^*)^*=TT^*$ and $(T^*T)^*=T^*T$ where $T$ is a densely defined
closable (or symmetric) operator.

\section{Main Results}

\begin{thm}\label{ADJ AA* A unclosed THM}Let $T$ be a densely defined
operator. If $TT^*$ is densely defined and $\sigma(TT^*)\neq \C$,
then
\[(TT^*)^*=\overline{T}T^*=TT^*.\]
In particular, $TT^*$ is self-adjoint.
\end{thm}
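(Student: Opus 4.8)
The plan is to reduce the statement to two classical facts: von Neumann's theorem applied to the closed operator $T^*$, and the characterization of self-adjointness of a symmetric operator through its spectrum. The point is that the two hypotheses "$TT^*$ densely defined" and "$\sigma(TT^*)\neq\C$" are each stronger than they look: the first forces $T$ to be closable, and the second forces $TT^*$ to be closed.

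First I would note that $D(TT^*)\subseteq D(T^*)$, so if $TT^*$ is densely defined then so is $T^*$; hence $T$ is closable and $T^{**}=\overline T$. Since $T^*$ is automatically closed, it is a closed densely defined operator, and von Neumann's theorem (recalled in the Introduction) applies to it: $(T^*)^*T^*=\overline T\,T^*$ is a densely defined, positive, self-adjoint operator. Because $T\subseteq\overline T$, we get the inclusion $TT^*\subseteq\overline T\,T^*$, so $TT^*$ is a densely defined restriction of a self-adjoint operator and is therefore symmetric.

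Next I would exploit the spectral hypothesis. If $\sigma(TT^*)\neq\C$, the resolvent set $\rho(TT^*)$ is nonempty; fix $\lambda\in\rho(TT^*)$. Then $(TT^*-\lambda)^{-1}$ is bounded and everywhere defined, hence closed, so $TT^*-\lambda$ is closed and thus $TT^*$ is closed. So $TT^*$ is a closed symmetric operator with nonempty resolvent set, and such an operator is self-adjoint: using that $\operatorname{ran}(TT^*-\lambda)=H$ and $TT^*\subseteq (TT^*)^*$, one checks that $(TT^*)^*-\lambda$ is injective, which forces $D((TT^*)^*)=D(TT^*)$. Hence $(TT^*)^*=TT^*$.

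Finally, $TT^*\subseteq\overline T\,T^*$ with both operators self-adjoint; since a self-adjoint operator admits no proper symmetric extension (equivalently, $A\subseteq B$ with $A,B$ self-adjoint gives $B=B^*\subseteq A^*=A$), we conclude $TT^*=\overline T\,T^*$, and therefore $(TT^*)^*=\overline T\,T^*=TT^*$, as claimed. I expect the only genuinely delicate step to be the passage from "nonempty resolvent set" to self-adjointness of the symmetric operator $TT^*$; everything else is bookkeeping with inclusions of graphs, and that passage is itself a standard lemma in unbounded operator theory.
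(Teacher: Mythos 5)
Your opening reductions are fine: $D(TT^*)\subseteq D(T^*)$ does force $T^*$ to be densely defined and $T$ to be closable, von Neumann's theorem applied to the closed operator $T^*$ does make $\overline{T}\,T^*$ self-adjoint and positive, the inclusion $TT^*\subseteq\overline{T}\,T^*$ makes $TT^*$ symmetric, and $\sigma(TT^*)\neq\C$ does make $TT^*$ closed. The gap is exactly at the step you flag as delicate, and it is a real one: it is \emph{false} that a closed symmetric operator with nonempty resolvent set is self-adjoint. A maximal symmetric operator with deficiency indices $(0,n)$, $n\geq 1$ (e.g.\ $i\,d/dt$ on $L^2(0,\infty)$ with a Dirichlet condition at the origin) is closed, symmetric, not self-adjoint, and yet an entire open half-plane lies in its resolvent set. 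Your proposed verification contains the precise slip that lets such examples through: for a densely defined $A$ one has $\ker(A^*-\lambda)=\ran(A-\overline{\lambda})^{\perp}$, so surjectivity of $TT^*-\lambda$ yields injectivity of $(TT^*)^*-\overline{\lambda}$, \emph{not} of $(TT^*)^*-\lambda$. These coincide only when $\lambda$ is real, and the hypothesis $\sigma(TT^*)\neq\C$ only hands you some $\lambda\in\C$.

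The step is repairable because $TT^*$ is not merely symmetric: it is positive (so its deficiency indices are equal, and the vanishing of one forces the vanishing of both), or, more directly, it sits inside the concrete self-adjoint operator $\overline{T}\,T^*$. The cleanest repair — and it is essentially the paper's argument, which applies the ``a surjective operator contained in an injective operator equals it'' lemma to $\overline{T}\,T^*-\overline{\lambda}\subseteq(TT^*)^*-\overline{\lambda}$ — is to apply that same lemma to $TT^*-\lambda\subseteq\overline{T}\,T^*-\lambda$: the smaller operator is surjective because $\lambda\in\rho(TT^*)$, and the larger one is injective (for $\lambda\notin[0,\infty)$ because $\overline{T}\,T^*$ is positive self-adjoint, and for $\lambda\geq 0$ because $\ker(\overline{T}\,T^*-\lambda)=\ran(\overline{T}\,T^*-\lambda)^{\perp}\subseteq\ran(TT^*-\lambda)^{\perp}=\{0\}$). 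This gives $TT^*=\overline{T}\,T^*$ in one stroke, after which your concluding bookkeeping goes through verbatim.
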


\begin{proof}Clearly
\[\overline{T}T^*\subset (TT^*)^*.\]
Since $\sigma(TT^*)\neq \C$, consider a complex number $\lambda$
such that $TT^*-\lambda I$ is (boundedly) invertible. Then
\[(TT^*-\lambda I)^*=(TT^*)^*-\overline{\lambda} I\]
remains invertible.

On the other hand, since $\overline{T}T^*$ is self-adjoint,
$\overline{T}T^*-\overline{\lambda} I$ too is (boundedly)
invertible. Therefore
\[\overline{T}T^*\subset (TT^*)^*\Longrightarrow \overline{T}T^*-\overline{\lambda} I\subset (TT^*)^*-\overline{\lambda} I.\]
Since e.g. $\overline{T}T^*-\overline{\lambda} I$ is surjective and
$(TT^*)^*-\overline{\lambda} I$ is injective, Lemma 1.3 in
\cite{SCHMUDG-book-2012} gives $\overline{T}T^*-\overline{\lambda}
I=(TT^*)^*-\overline{\lambda} I$ or merely $\overline{T}T^*=
(TT^*)^*$.

To show the other equality, we may reason as above by using
$TT^*\subset \overline{T}T^*$. Alternatively, here is a different
approach: Since $\overline{T}T^*$ is self-adjoint, we have
\[\overline{T}T^*=(\overline{T}T^*)^*=(TT^*)^{**}=\overline{TT^*}=TT^*\]
where the closedness of $TT^*$ is obtained from $\sigma(TT^*)\neq
\C$.
\end{proof}

Mutatis mutandis, the following may also be shown.

\begin{pro}\label{ADJ A*A A unclosed pro}Let $A$ be a densely defined
operator. If $T^*T$ is densely defined and $\sigma(T^*T)\neq \C$,
then
\[(T^*T)^*=T^*\overline{T}=T^*T.\]
In particular, $T^*T$ is self-adjoint.
\end{pro}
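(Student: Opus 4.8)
The plan is to follow, \emph{mutatis mutandis}, the proof of Theorem~\ref{ADJ AA* A unclosed THM}, with the roles of $TT^*$ and $\overline{T}T^*$ now played by $T^*T$ and $T^*\overline{T}$. Since the statement already involves $\overline{T}$, we are tacitly in the situation where $T$ is closable, so that $T^*=(\overline{T})^*$ and hence $T^*\overline{T}=(\overline{T})^*\overline{T}$ is, by von Neumann's theorem, a densely defined, self-adjoint and positive operator (as $\overline{T}$ is densely defined and closed). The first step is the easy inclusion $T^*\overline{T}\subset (T^*T)^*$: from $T\subset\overline{T}$ we get $T^*T\subset T^*\overline{T}$, whence $(T^*\overline{T})^*\subset (T^*T)^*$, and by self-adjointness of $T^*\overline{T}$ this reads $T^*\overline{T}\subset (T^*T)^*$.

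Next I would exploit $\sigma(T^*T)\neq\C$. Pick $\lambda$ in the (nonempty, open) resolvent set of $T^*T$; then $T^*T-\lambda I$ is boundedly invertible, so in particular $T^*T$ is closed and $(T^*T-\lambda I)^*=(T^*T)^*-\overline{\lambda}I$ is boundedly invertible as well. The inclusion $T^*T\subset T^*\overline{T}$ forces $T^*\overline{T}-\lambda I$ to be surjective, and since $T^*\overline{T}$ is self-adjoint this already yields $\lambda\in\rho(T^*\overline{T})$; as $\sigma(T^*\overline{T})\subset\R$ is symmetric about the real axis, also $\overline{\lambda}\in\rho(T^*\overline{T})$, so $T^*\overline{T}-\overline{\lambda}I$ is boundedly invertible. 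Applying the inclusion $T^*\overline{T}-\overline{\lambda}I\subset (T^*T)^*-\overline{\lambda}I$, where the left-hand side is surjective and the right-hand side injective, Lemma~1.3 of \cite{SCHMUDG-book-2012} gives $T^*\overline{T}-\overline{\lambda}I=(T^*T)^*-\overline{\lambda}I$, i.e. $T^*\overline{T}=(T^*T)^*$. For the remaining equality, since $T^*T$ is closed and $T^*\overline{T}$ is self-adjoint,
\[T^*\overline{T}=(T^*\overline{T})^*=(T^*T)^{**}=\overline{T^*T}=T^*T,\]
and in particular $T^*T$ is self-adjoint.

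The only step I expect to need a word of care is the invertibility transfer $\lambda\in\rho(T^*T)\Rightarrow\overline{\lambda}\in\rho(T^*\overline{T})$: one must observe that for a self-adjoint operator $S$, mere surjectivity of $S-\mu I$ already implies $\mu\in\rho(S)$ (its adjoint $S-\overline{\mu}I$ is then injective, and, $\sigma(S)$ being real, $S-\mu I$ is in fact bijective), and then that $\sigma(S)\subset\R$ makes the resolvent set stable under conjugation. Alternatively, and even more cheaply, since $\rho(T^*T)$ is nonempty and open one may just choose $\lambda\notin\R$ at the outset, so that $\overline{\lambda}\notin\R\supseteq\sigma(T^*\overline{T})$ is immediate and this point disappears. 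Everything else is a verbatim transcription of the proof of Theorem~\ref{ADJ AA* A unclosed THM}.
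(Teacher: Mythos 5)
Your proof is correct and is precisely the \emph{mutatis mutandis} adaptation of the paper's proof of Theorem~\ref{ADJ AA* A unclosed THM} that the paper itself invokes for this proposition (no separate argument is written out there): the easy inclusion $T^*\overline{T}\subset (T^*T)^*$, the passage to $\overline{\lambda}$ via adjoints, and Lemma~1.3 of \cite{SCHMUDG-book-2012} applied to a surjective restriction of an injective operator. The only difference is that you explicitly justify why $\overline{\lambda}\in\rho(T^*\overline{T})$ (a point the paper's proof of the theorem passes over silently), and both of your fixes --- the surjectivity argument for self-adjoint operators, or simply choosing $\lambda\notin\R$ from the nonempty open resolvent set --- are valid.
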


\begin{rema}
When $\sigma(TT^*)\neq\C$ and $\sigma(T^*T)\neq \C$, then both
$TT^*$ and $T^*T$ are densely defined, and besides
\[(TT^*)^*=TT^*\text{ and } (T^*T)^*=T^*T.\]
This may be consulted in \cite{Hardt-Konstantinov-Spectrum-product}
and \cite{Hardt-Mennicken-OP-Th-ADv-APP}.
\end{rema}

It is natural to ask whether the identity $(TT^*)^*=TT^*$, which
holds for densely defined closed operators $T$, still holds for
closable operators.  The following example answers this question in
the negative.

\begin{exa}\label{Tarcsay example (TT*) neq T**T}Let $A$ be a densely defined positive operator in a Hilbert space $H$ such that it is
not essentially self-adjoint. Assume further that ran$A$ is dense in
$H$. Define an inner product space on $\ran A$ by
\[\langle Ax,Ay\rangle _{A}=\langle Ax,y\rangle,~x,y\in D(A).\]

Denote the completion of this pre-Hilbert space by $H_A$. Define the
canonical embedding operator $T:H_A\supseteq \text{ran}A\to H$ by
\[T(Ax):=Ax,~x\in D(A).\]
It may then be shown that $D(A)\subset D(T^*)$ and
\[T^*x=Ax\in H_A,~x\in D(A).\]
In particular, $T:H_A\supset \ran A\to H$ is a densely defined and
closable linear operator. Besides, $T^{**}T^*$ is a self-adjoint
positive extension of $A$.

On the other hand, notice that
\[\ker T^*=(\ran T)^{\perp}=(\ran A)^{\perp}=\{0\}.\]
In other words, $T^*$ is one-to-one. So, $T^*y\in\ran A$ (=$D(T)$),
where $y\in D(T^*)$, implies $y\in D(A)$. Accordingly,
\[D(TT^*)=\{y\in D(T^*):T^*y\in D(T)\}=D(A).\]
Thus, $TT^*=A$. This signifies that
\[(TT^*)=A^*\neq T^{**}T^*=\overline{T}T^*\]
for the latter operator is self-adjoint, whilst the former is not.
\end{exa}

The preceding example may be beefed up to obtain a stronger
counterexample.

\begin{exa}Let $T$ be a closable densely defined operator such that
$(TT^*)^*\neq \overline{T}T^*$ (e.g. as in Example \ref{Tarcsay
example (TT*) neq T**T}), then set
\[S=\left(
      \begin{array}{cc}
        0 & T \\
        T^* & 0 \\
      \end{array}
    \right)
\]
with $D(S)=D(T)\oplus D(T^*)$. So $S$ is densely defined, and since
$S^*=\left(
      \begin{array}{cc}
        0 & \overline{T} \\
        T^* & 0 \\
      \end{array}
    \right)$, it is seen that $S$ is symmetric.

Now,
\[SS^*=\left(
         \begin{array}{cc}
           TT^* & 0 \\
           0 & T^*\overline{T} \\
         \end{array}
       \right)\text{ and }\overline{S}S^*=\left(
         \begin{array}{cc}
           \overline{T}T^* & 0 \\
           0 & T^*\overline{T} \\
         \end{array}
       \right).\]
But
\[(SS^*)^*=\left(
         \begin{array}{cc}
           (TT^*)^* & 0 \\
           0 & (T^*\overline{T})^* \\
         \end{array}
       \right)=\left(
         \begin{array}{cc}
           (TT^*)^* & 0 \\
           0 & T^*\overline{T} \\
         \end{array}
       \right)
\]
because $T^*\overline{T}$ is self-adjoint. Since we already know
that $(TT^*)^*\neq \overline{T}T^*$, it ensues that $(SS^*)^*\neq
\overline{S}S^*$ and yet $S$ is densely defined and symmetric.
\end{exa}

We have been assuming that $TT^*$ is densely defined as, in general,
$D(TT^*)$ could be non-dense. For instance, in
\cite{Mortad-TRIVIALITY POWERS DOMAINS} (or \cite{Mortad-cex-BOOK}),
we have found an example of a densely defined $T$ which obeys
\[D(T^2)=D(T^*)=D(TT^*)=D(T^*T)=\{0\}.\]

Obviously such an operator $T$ cannot be closable. So, does the
closability of $T$ suffice to make $TT^*$ densely defined? The
answer is negative even when $T$ is symmetric. This is seen next.

\begin{exa}
Let $E$ be a dense linear proper subspace of $H$, let $u$ be a
non-zero element in $H$ but not in $E$, and define $T$ to be
projection on the 1-dimensional subspace spanned by $u$ with
$D(T)=E$. Then $T$ is a bounded, non-everywhere defined, unclosed,
and symmetric operator. Also, $T^*$ is the same projection, defined
on the entire $H$. Then $T^*x$ is in $E$ only if it is 0. So,
\[D(TT^*)=\{u\}^{\perp},\] which is not dense.
\end{exa}

\section{Acknowledgement}

The author wishes to thank Professor Zsigmond Tarcsay for Example
\ref{Tarcsay example (TT*) neq T**T} which was based on a
construction by Z. Sebestyén and J. Stochel (see
\cite{Sebestyen-Stochel-Restrictions-positive}).

\end{document}